\documentclass{amsart}
\usepackage{mathptmx, bbm, amscd, amssymb, enumerate, colonequals, mathdots}

\usepackage{color}
\definecolor{chianti}{rgb}{0.6,0,0}
\definecolor{meretale}{rgb}{0,0,.6}
\definecolor{leaf}{rgb}{0,.35,0}
\usepackage[colorlinks=true, pagebackref, hyperindex, citecolor=meretale, urlcolor=leaf, linkcolor=chianti]{hyperref}

\newtheorem{theorem}{Theorem}[section]
\newtheorem{lemma}[theorem]{Lemma}
\newtheorem{corollary}[theorem]{Corollary}
\newtheorem{proposition}[theorem]{Proposition}

\theoremstyle{definition}

\newtheorem{example}[theorem]{Example}
\newtheorem{remark}[theorem]{Remark}
\numberwithin{equation}{theorem}

\def\ff{\operatorname{frac}}

\def\image{\operatorname{im}}
\def\coker{\operatorname{coker}}

\def\rank{\operatorname{rank}}

\def\Tr{{\operatorname{Tr}}}

\def\GL{\operatorname{GL}}
\def\SL{\operatorname{SL}}

\def\fraka{\mathfrak{a}}

\def\frakm{\mathfrak{m}}
\def\frakn{\mathfrak{n}}
\def\frakp{\mathfrak{p}}
\def\frakq{\mathfrak{q}}

\def\AA{\mathbb{A}}
\def\FF{\mathbb{F}}
\def\NN{\mathbb{N}}
\def\ZZ{\mathbb{Z}}

\def\ge{\geqslant}
\def\le{\leqslant}
\def\phi{\varphi}

\def\to{\longrightarrow}
\def\mapsto{\longmapsto}

\def\Hom{\operatorname{Hom}}
\def\Ext{\operatorname{Ext}}

\minCDarrowwidth20pt

\begin{document}
\title[Local cohomology of modular invariant rings]{Local cohomology of modular invariant rings}

\author{Kriti Goel}
\address{Department of Mathematics, University of Utah, 155 South 1400 East, Salt Lake City, UT~84112, USA}
\email{kritigoel.maths@gmail.com}

\author{Jack Jeffries}
\address{Department of Mathematics, University of Nebraska-Lincoln, 203 Avery Hall, Lincoln, NE~68588, USA}
\email{jack.jeffries@unl.edu}

\author{Anurag K. Singh}
\address{Department of Mathematics, University of Utah, 155 South 1400 East, Salt Lake City, UT~84112, USA}
\email{singh@math.utah.edu}

\thanks{K.G. was supported by a Fulbright-Nehru Postdoctoral Research Fellowship at the University of Utah, J.J. by NSF CAREER Award DMS~2044833, and A.K.S. by NSF grants DMS~1801285 and DMS~2101671.}

\subjclass[2010]{Primary 13A50; Secondary 13D45, 13B05}

\begin{abstract}
For $K$ a field, consider a finite subgroup $G$ of $\operatorname{GL}_n(K)$ with its natural action on the polynomial ring $R\colonequals K[x_1,\dots,x_n]$. Let $\mathfrak{n}$ denote the homogeneous maximal ideal of the ring of invariants $R^G$. We study how the local cohomology module $H^n_{\mathfrak{n}}(R^G)$ compares with $H^n_{\mathfrak{n}}(R)^G$. Various results on the $a$-invariant and on the Hilbert series of~$H^n_\mathfrak{n}(R^G)$ are obtained as a consequence.
\end{abstract}
\maketitle

\section{Introduction}

Let $K$ be a field. Consider a finite group $G$ acting on a polynomial ring $R\colonequals K[x_1,\dots,x_n]$ via degree-preserving $K$-algebra automorphisms; the action of $G$ on $R$ is completely determined by its action on one-forms, so there is little loss of generality in taking $G$ to be a finite subgroup of $\GL_n(K)$, with the action given by
\[
M\colon X\mapsto MX,
\]
where $X$ is a column vector of the indeterminates; this is the action of $G$ on $R$ considered throughout the present paper. In the \emph{nonmodular} case---when the order of $G$ is invertible in $K$---there is a wealth of results relating properties of the invariant ring~$R^G$ to properties of the group action; several of these fail in the \emph{modular} case, i.e., when the order of $G$ is a multiple of the characteristic of $K$. For instance, in the nonmodular case, the functor $(-)^G$ is exact, yielding an $R^G$-isomorphism of local cohomology modules
\[
H^n_\frakm(R)^G\ \cong\ H^n_\frakn(R^G),
\]
where $\frakm$ and $\frakn$ denote the respective homogeneous maximal ideals of $R$ and $R^G$. This isomorphism no longer holds in the modular case; indeed, one of our goals is to study the failure of this isomorphism. Quite generally, the transfer map provides a surjection~$H^n_\frakm(R)\to H^n_\frakn(R^G)$; when $G$ contains no transvections, we explicitly describe the kernel in Theorem~\ref{theorem:main}. This result may be viewed as a dual formulation of a theorem of Peskin~\cite{Peskin}, that relates the canonical modules of $R$ and of $R^G$, see Remark~\ref{remark:peskin}.

We apply Theorem~\ref{theorem:main} to study the local cohomology $a$-invariant of $R^G$ in~\S\ref{section:a:invariant}, proving that the $a$-invariant of $R^G$ equals that of $R$ if and only if $G$ is a subgroup of the special linear group with no pseudoreflections; see Theorem~\ref{theorem:a:invariant}. In \S\ref{section:hilbert}, we record a surprising consequence of our main theorem towards comparing the ranks of the graded components of the local cohomology modules $H^n_\frakn(R^G)$ and $H^n_\frakm(R)^G$, proving that they coincide when~$G$ is cyclic with no transvections. The study of local cohomology modules of invariant rings of finite groups goes back at least to work of Ellingsrud and Skjelbred \cite{ES}, where they use spectral sequences relating local cohomology and group cohomology to give upper bounds on the depth of modular invariant rings.

The article~\cite{Stanley} by Stanley provides an excellent account of the theory in the nonmodular case; for sources that include the modular case as well, we refer the reader to Benson~\cite{Benson} and Campbell and Wehlau~\cite{Campbell:Wehlau}. We have attempted to keep this paper largely self-contained, and accessible to the reader familiar with the basics of local cohomology; some preliminary results are reviewed or proved in~\S\ref{section:preliminary}, towards simplifying later arguments. Our study is closely related to earlier work on the canonical module and the Gorenstein property of invariant rings, e.g., \cite{Watanabe:1, Watanabe:2, Peskin, Broer, Braun:2011, FW, Hashimoto}; these are discussed briefly in~\S\ref{section:preliminary}.

%%%%%%%%%%%%%%%%%%%%%%%%%%%%%%%%%%%%%%%%%%%%%%%%%%%%%%%%%%%%%%%%%%%%%%%%
\section{Preliminary remarks}
\label{section:preliminary}
%%%%%%%%%%%%%%%%%%%%%%%%%%%%%%%%%%%%%%%%%%%%%%%%%%%%%%%%%%%%%%%%%%%%%%%%

We begin with some standard facts about finite group actions:

%%%%%%%%%%%%%%%%%%%%%%%%%%%%%%%%%%%%%%%%%%%%%%%%%%%%%%%%%%%%%%%%%%%%%%%%
\subsection*{Pseudoreflections}
%%%%%%%%%%%%%%%%%%%%%%%%%%%%%%%%%%%%%%%%%%%%%%%%%%%%%%%%%%%%%%%%%%%%%%%%

An element $g\in\GL_n(K)$ of finite order is a \emph{pseudoreflection} if it fixes a hyperplane; by convention, the group identity is not a pseudoreflection. It follows that $g$ is a pseudoreflection precisely if the matrix $g-I$, with $I$ the identity matrix, has rank one. An equivalent formulation is that the Jordan form of $g$, after extending scalars, is 
\[
\left[
\begin{array}{c|cccc}
\zeta & 0 & 0 & \cdots & 0 \\
\hline
0 & 1 & 0 & \cdots & 0 \\
0 & 0 & 1 & & 0 \\
\vdots & \vdots & & \ddots & \\
0 & 0 & 0 & & 1
\end{array}
\right]
\quad\text{ or }\quad
\left[
\begin{array}{cc|ccc}
1 & 1 & 0 & \cdots & 0 \\
0 & 1 & 0 & \cdots & 0 \\
\hline
0 & 0 & 1 & & 0 \\
\vdots & \vdots & & \ddots & \\
0 & 0 & 0 & & 1
\end{array}
\right].
\]
Since $g$ has finite order, the element $\zeta$ in the first case is a root of unity. The second case only occurs when $K$ has characteristic $p>0$; such an element is a \emph{transvection}.

\begin{remark}
\label{remark:1-g}
Fix $g\in G$. We use~$(1-g)R$ to denote the ideal of $R\colonequals K[x_1,\dots,x_n]$ generated by all elements of the form $r-g(r)$ for~$r\in R$. Since
\[
(1-g)(r_1r_2)\ =\ r_2(1-g)(r_1) + g(r_1)(1-g)(r_2),
\] 
the ideal $(1-g)R$ is generated by the elements $(1-g)(x_i)$ for $1\le i\le n$. Note that $g$ is a pseudoreflection if and only if the ideal $(1-g)R$ has height one.
\end{remark}

%%%%%%%%%%%%%%%%%%%%%%%%%%%%%%%%%%%%%%%%%%%%%%%%%%%%%%%%%%%%%%%%%%%%%%%%
\subsection*{Transfer}
%%%%%%%%%%%%%%%%%%%%%%%%%%%%%%%%%%%%%%%%%%%%%%%%%%%%%%%%%%%%%%%%%%%%%%%%
Let $G$ be a finite subgroup acting on a ring $R$. For a subgroup $H$, the \emph{transfer map} $\Tr_H^G\colon R^H \to R^G$ is defined as
\[
\Tr_H^G(r)\colonequals\!\!\!\!\!\!\sum_{gH\in G/H}\!\!\!\!\!\!g(r),
\]
where the sum is over a set of left coset representatives. It is straightforward to see that~$\Tr_H^G$ is an $R^G$-linear map, independent of the coset representatives. Precomposing with the inclusion $R^G \subseteq R^H$, the composition
\[
\CD
R^G @>>> R^H @>\Tr_H^G>> R^G
\endCD
\]
is multiplication by the integer $[G:H]$, i.e., by the index of $H$ in $G$. It follows that $\Tr_H^G$ is surjective if $[G:H]$ is invertible in $R$.

When $H$ is the subgroup consisting only of the identity element, we use $\Tr^G$ or $\Tr$ to denote the transfer map $R\to R^G$.

The following lemma appears in various forms in the literature, e.g.,~\cite[Theorem~2.4]{Feshbach}, \cite[Proposition~3.7]{Braun:2011}, and~\cite[Theorem~2.4.5]{Neusel:Smith}; we include a self-contained proof:

\begin{lemma}
\label{lemma:transfer}
Let $G$ be a finite subgroup of $\GL_n(K)$, without transvections, acting on the polynomial ring~$R\colonequals K[x_1,\dots,x_n]$. Then the image of the transfer map $\Tr\colon R\to R^G$ is an ideal of~$R^G$ of height at least two.
\end{lemma}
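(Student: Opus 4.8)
The fact that $\image\Tr$ is an ideal of $R^G$ is immediate from the $R^G$-linearity of $\Tr$, and $\image\Tr\neq 0$ since the automorphisms comprising $G$ are $K$-linearly independent; thus $\height\,\image\Tr\ge 1$, and the task is to rule out containment of $\image\Tr$ in a prime $\frakp$ of $R^G$ of height one. The plan is to examine $\Tr$ after localizing at such a $\frakp$. Set $A\colonequals (R^G)_\frakp$, a discrete valuation ring with uniformizer $t$, and $B\colonequals(R^G\setminus\frakp)^{-1}R$. Standard facts about finite group actions give that $B$ is a one-dimensional normal semilocal domain, module-finite over $A$ with $B^G=A$, whose maximal ideals $\frakq_1B,\dots,\frakq_mB$ are permuted transitively by $G$, the $\frakq_i$ being the height-one primes of $R$ lying over $\frakp$. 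If $\image\Tr\subseteq\frakp$, then the induced transfer $B\to A$ has image in $tA$, so the reduced operator $\sum_{g\in G}\bar g$ on $B/tB$ is zero; I aim to contradict this by showing it is nonzero.

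Writing $B/tB\cong\prod_{i=1}^m D_i$ with $D_i\colonequals R_{\frakq_i}/tR_{\frakq_i}$, the group $G$ permutes the factors transitively, and the stabilizer of $D\colonequals D_1$ is the decomposition group $H\colonequals G_{\frakq_1}$. A short computation with coset representatives of $H$ in $G$ shows that $\sum_{g\in G}\bar g$ is nonzero on $B/tB$ exactly when the $H$-transfer $\Tr^H_D\colonequals\sum_{h\in H}\bar h$ is nonzero on $D$. Here $D=R_\frakq/\frakm_{R_\frakq}^{\,e}$ for $\frakq\colonequals\frakq_1$ and $e\colonequals v_\frakq(t)\ge 1$: an Artinian local ring with residue field $\kappa(\frakq)$ carrying an $H$-action, and I will peel off the inertia subgroup $I\colonequals I_\frakq\trianglelefteq H$, the kernel of the action $H\to\operatorname{Aut}\kappa(\frakq)$.

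The decisive use of the hypothesis is the assertion that \emph{$|I|$ is invertible in $K$ once $G$ has no transvections}: a nontrivial $g\in I_\frakq$ of order $p=\operatorname{char}K$ satisfies $g(x_i)-x_i\in\frakq$ for every $i$, which forces $\frakq$ to be generated by a linear form, forces $g-I$ to have rank one, and---since in characteristic $p$ a $p$-th root of unity is trivial---forces $g$ to be unipotent, hence a transvection; so ``no transvections'' yields $p\nmid|I|$ by Cauchy's theorem. Granting this, the averaging operator $\tfrac1{|I|}\sum_{\iota\in I}\iota$ shows that $H^1(I,-)$ vanishes on $R_\frakq$-modules (so taking $I$-invariants is exact here, and $D^I$ is again an Artinian local ring with residue field $\kappa(\frakq)$) and that the $I$-transfer maps $D$ onto $D^I$; hence $\Tr^H_D$ factors through the transfer $\Tr^{H/I}\colon D^I\to D^H$, and it suffices to see this is nonzero. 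Reducing modulo the maximal ideal of $D^I$---on which $H/I$ acts by ring automorphisms, faithfully on $\kappa(\frakq)$---turns $\Tr^{H/I}$ into the trace map of the finite, necessarily Galois (hence separable) extension $\kappa(\frakq)/\kappa(\frakq)^{H/I}$, and that trace is surjective; so $\Tr^{H/I}\ne 0$, which propagates back to a contradiction. As $\frakp$ was an arbitrary height-one prime, $\height\,\image\Tr\ge 2$.

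I expect the main obstacle to be precisely the italicized step---distilling ``no transvections'' into the tame-ramification fact $p\nmid|I_\frakq|$---because this is what licenses the several averaging arguments (exactness of $I$-invariants, surjectivity of the $I$-transfer onto invariants, preservation of the residue field) on which everything downstream rests; by comparison, the bookkeeping with decomposition and inertia groups and the final residue-field trace computation are routine.
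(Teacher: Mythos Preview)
Your proof is correct, but it takes a genuinely different route from the paper's. The paper first extends scalars to assume $K$ is algebraically closed and then argues geometrically: given a height-one prime $\frakq$ of $R$ lying over $\frakp$, it selects a closed point $\frakm\supseteq\frakq$ that avoids every fixed locus $V\big((1-g)R\big)$ for $g$ of order~$p$; this forces the point-stabilizer $H$ of $\frakm$ to have order coprime to $p$, so $\Tr^H\colon R\to R^H$ is surjective, and a direct coset-representative computation then shows $\Tr_H^G(R^H)\not\subseteq\frakm$. You instead remain at the height-one prime and run a ramification-theoretic argument: localize at~$\frakp$, split the semilocal fiber $B/tB$ into local factors indexed by the $\frakq_i$, and peel off the inertia group $I_\frakq$ so that the problem becomes the nonvanishing of the $H/I$-trace on the residue field $\kappa(\frakq)$, which is the trace of a Galois (hence separable) extension. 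Both proofs hinge on the identical translation of the hypothesis---an order-$p$ element $g$ with $(1-g)R\subseteq\frakq$ is necessarily a transvection---but you encode this as ``$p\nmid|I_\frakq|$'' (tame ramification), whereas the paper encodes it as ``the generic point of $V(\frakq)$ misses every order-$p$ fixed hyperplane, so some closed point does too.'' Your route avoids the passage to an algebraically closed field and is tidy for a reader comfortable with decomposition and inertia groups; the paper's route is more elementary and self-contained, needing no Galois theory of residue fields. One minor expository point: you do not actually need that $\frakq$ is generated by a linear form---all that is used is that $(1-g)R\subseteq\frakq$ has height one, which already forces $\rank(g-I)=1$.
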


\begin{proof}
The transfer map is surjective in the nonmodular case, so assume that $K$ has positive characteristic $p$. The claim reduces to the case where $K$ is algebraically closed, as we now assume. Let $\frakp$ be a prime ideal of $R^G$ height one, and $\frakq$ a height one prime of $R$ containing $\frakp$. It suffices to show that there is a maximal ideal~$\frakm$ of $R$, containing $\frakq$, such that $\Tr(R)\not\subseteq\frakm$.

By Remark~\ref{remark:1-g}, the prime $\frakq$ does not contain an ideal of the form $(1-g)R$ for any group element $g$ of order $p$, since such an element would then be a transvection. Let $\fraka$ denote the product of the ideals $(1-g)R$, taken over group elements $g$ of order $p$. Then $\fraka\not\subseteq\frakq$, so there exists a point~$(a_1,\dots,a_n)\in\AA^n_K$ that lies in the algebraic set $V(\frakq)$ but not in $V(\fraka)$. Set~$\frakm\colonequals(x_1-a_1,\dots,x_n-a_n)R$. We claim that $g(\frakm)\neq\frakm$ for each $g\in G$ of order $p$.

If the claim is false, there exists an element $g$ of order $p$ such that
\[
g(x_i-a_i)\ =\ g(x_i)-a_i\ \in\ \frakm\qquad\text{ for each $1\le i\le n$}.
\]
But $x_i-a_i\in\frakm$ as well, so $x_i-g(x_i)\in\frakm$ for each $i$. These generate $(1-g)R$, yielding a contradiction. This proves the claim.

Consider the action of $G$ on the set of maximal ideals of $R$. Since the stabilizer $H$ of $\frakm$ has no elements of order~$p$, the order of $H$ is invertible in $K$. The transfer map $R\to R^G$ factors as
\[
\CD
R @>\Tr^H>> R^H @>\Tr_H^G>> R^G,
\endCD
\]
where the first map is surjective, so it suffices to show that the image of $\Tr_H^G$ is not contained in $\frakm$. Let $\{g_1,\dots,g_\ell\}$ be coset representatives for $G/H$, where $g_1H=H$. Then
\[
\frakm=g_1^{-1}(\frakm),\ g_2^{-1}(\frakm),\ \dots,\ g_\ell^{-1}(\frakm)
\]
are distinct maximal ideals of $R$, so there exists an element $r\in R$ with $r\in g_i^{-1}(\frakm)$ for each~$i\le 2\le\ell$, and $r\notin\frakm$. These conditions are preserved when $r$ is replaced by its orbit product under $H$, so we may assume $r\in R^H$. But then
\begin{alignat*}3
\Tr_H^G(r) &= g_1(r)+g_2(r)\dots+g_\ell(r)\\
&\equiv r\mod\frakm.
\end{alignat*}
It follows that $\Tr_H^G(R^H)$ is not contained in $\frakm$.
\end{proof}

%%%%%%%%%%%%%%%%%%%%%%%%%%%%%%%%%%%%%%%%%%%%%%%%%%%%%%%%%%%%%%%%%%%%%%%%
\subsection*{Local cohomology and the canonical module}
%%%%%%%%%%%%%%%%%%%%%%%%%%%%%%%%%%%%%%%%%%%%%%%%%%%%%%%%%%%%%%%%%%%%%%%%

Let $S$ be an $\NN$-graded ring that is finitely generated over a field $S_0=K$. Let $\frakn$ denote the homogeneous maximal ideal of $S$, and set~$n\colonequals\dim S$. Let $y_1,\dots,y_n$ be a~\emph{homogeneous system of parameters} for $S$, i.e., a sequence of $n$ homogeneous elements that generate an ideal with radical $\frakn$. For an $S$-module~$M$ and an integer $k\ge 0$, the \emph{local cohomology} module $H^k_\frakn(M)$ is defined as
\[
H^k_\frakn(M) = \varinjlim_i\Ext^k_S(S/\frakn^i,M),
\]
and may be identified with the \v Cech cohomology module $\check H^k(y_1,\dots,y_n;\,S)$, i.e., the~$k$-th cohomology of the \v Cech complex
\[
\CD
0 @>>> M @>>> \bigoplus\limits_i M_{y_i} @>>> \bigoplus\limits_{i<j} M_{y_iy_j} @>>> \cdots @>>> M_{y_1\cdots y_n} @>>> 0.
\endCD
\]
In particular, this identifies $H^n_\frakn(M)$ with
\[
\frac{M_{y_1\cdots y_n}}{\sum_i M_{y_1\cdots \hat{y}_i\cdots y_n}}.
\]
Under this identification, a local cohomology class
\[
\left[\frac{m}{y_1^d\cdots y_n^d}\right]\ \in\ H^n_\frakn(M),
\]
for $m\in M$, is zero if and only if there exists an integer $\ell\ge0$ such that
\[
m(y_1\cdots y_n)^\ell\ \in\ \big(y_1^{d+\ell},\ \dots,\ y_n^{d+\ell}\big)M.
\]

When $M$ is a $\ZZ$-graded $S$-module, each $H^k_\frakn(M)$ acquires a natural $\ZZ$-grading. Following Goto and Watanabe~\cite{Goto:Watanabe}, the \emph{$a$-invariant} of the ring $S$, denoted $a(S)$, is the largest integer~$a$ such that the graded component ${[H_\frakn^n(S)]}_a$ is nonzero.

Let $M$ be a $\ZZ$-graded $S$-module. We use $M(i)$ to denote the module with the shifted grading ${[M(i)]}_j={[M]}_{i+j}$ for each $j\in\ZZ$. The \emph{graded $K$-dual} of $M$, denoted $M^*$, is the~$S$-module with graded components
\[
{[M^*]}_i\ =\ \Hom_K(M,K(i)),
\]
where $\Hom_K(M,K(i))$ is the vector space of degree preserving $K$-linear maps $M\to K(i)$. Assume now that $S$ is normal; the \emph{canonical module} of $S$ is 
\[
\omega_S\colonequals H_\frakn^n(S)^*.
\]
When the ring $S$ is Gorenstein, one has a degree-preserving isomorphism
\[
\omega_S\ \cong\ S(a),
\]
where $a=a(S)$. A normal $\NN$-graded ring $S$ is Gorenstein precisely if it is Cohen-Macaulay and $\omega_S$ is a cyclic $S$-module; dropping the Cohen-Macaulay condition, a normal $\NN$-graded ring $S$ is \emph{quasi-Gorenstein} if $\omega_S$ is a cyclic $S$-module.

Let $G$ be a finite subgroup of $\GL_n(K)$, acting on a polynomial ring~$R$. In the nonmodular case, the invariant ring $R^G$ is Cohen-Macaulay by~\cite{Hochster:Eagon}, though it need not be Cohen-Macaulay in the modular case; this leads to interest in the quasi-Gorenstein property. We summarize some of the work in this direction:

Suppose first that the order of $G$ is invertible in the field $K$; this is the nonmodular case. Watanabe proved that if $G\subseteq\SL_n(K)$, then $R^G$ is Gorenstein~\cite{Watanabe:1}, and that if $G$ contains no pseudoreflections, then the converse holds as well, i.e., if $R^G$ is Gorenstein, then $G\subseteq\SL_n(K)$, see~\cite{Watanabe:2}. Braun~\cite{Braun:2011} proved analogues of these in the modular case when $G$ contains no pseudoreflections: the ring $R^G$ is quasi-Gorenstein if and only if~$G$ is contained in $\SL_n(K)$. Some of these results are extended in~\cite{FW} and~\cite{Hashimoto}.

It was conjectured that if $R^G$ is Cohen-Macaulay and $G\subseteq\SL_n(K)$, then $R^G$ is Gorenstein,~\cite[Conjecture~5]{KKMMVW}; while this is true in the nonmodular case by~\cite{Watanabe:1}, the conjecture was shown to be false by Braun~\cite{Braun:2016}, with the simplest example being the subgroup~$G$ of $\SL_2(\FF_9)$ generated by
\[
\begin{bmatrix}
\zeta & 0\\
0 & \zeta^{-1}
\end{bmatrix}
\quad\text{ and }\quad
\begin{bmatrix}
1 & 1\\
0 & 1
\end{bmatrix},
\]
where $\zeta$ is a primitive $4$-th root of unity. Note that $G$ contains a transvection---as it must!

%%%%%%%%%%%%%%%%%%%%%%%%%%%%%%%%%%%%%%%%%%%%%%%%%%%%%%%%%%%%%%%%%%%%%%%%
\subsection*{The group action on local cohomology}
%%%%%%%%%%%%%%%%%%%%%%%%%%%%%%%%%%%%%%%%%%%%%%%%%%%%%%%%%%%%%%%%%%%%%%%%

Let $G$ be a finite subgroup of $\GL_n(K)$, acting on a polynomial ring~$R\colonequals K[x_1,\dots,x_n]$. The action of $G$ on $H^n_\frakm(R)$ may be interpreted in several equivalent ways: for $g\in G$, the automorphism $g\colon R\to R$ induces a map
\[
\CD
H^n_\frakm(R) @>g>> H^n_{g(\frakm)}(R) = H^n_{\frakm}(R),
\endCD
\]
where the equality is simply because $g(\frakm)=\frakm$.

Alternatively, let $y_1,\dots,y_n$ be a homogeneous system of parameters for $R^G$, and use the identification of $H^n_{\frakm}(R)$ with \v Cech cohomology $\check H^n(y_1,\dots,y_n;\,R)$. Under this identification, for $g\in G$ and $r\in R$ one has
\[
\eta\colonequals\left[\frac{r}{y_1^d\cdots y_n^d}\right]\mapsto\left[\frac{g(r)}{y_1^d\cdots y_n^d}\right]\ =\ g(\eta).
\]
Note that $\eta$ is fixed by $g$ precisely if there exists an integer $\ell\ge0$ such that
\[
\big(g(r)-r\big)(y_1\cdots y_n)^\ell\ \in\ \big(y_1^{d+\ell},\ \dots,\ y_n^{d+\ell}\big)R.
\]
Since $y_1,\dots,y_n$ is a regular sequence on $R$, this is equivalent to
\[
g(r)-r\ \in\ \big(y_1^d,\ \dots,\ y_n^d\big)R.
\]
It follows that $\eta$ as above if fixed by $g$ precisely if the image of $r$ in the Artinian ring
\[
A\colonequals R/(y_1^d,\dots,y_n^d)R
\]
is fixed by $g$ under the induced action. More generally, $A$ is isomorphic as a $G$-module to the submodule of $H^n_{\frakm}(R)$ consisting of elements of the form
\[
\left[\frac{r}{y_1^d\cdots y_n^d}\right],\quad\text{ for $r\in R$}.
\]
Yet another point of view may be obtained from the ideas surrounding Remark~\ref{remark:action}; we leave this to the interested reader.

Recall that the transfer map $\Tr\colon R\to R^G$ is a homomorphism of $R^G$-modules, and hence induces a map
\begin{equation}
\label{equation:transfer:lc}
\CD
H^n_\frakn(R) @>\Tr>> H^n_\frakn(R^G),
\endCD
\end{equation}
where $\frakn$ is the homogeneous maximal ideal of $R^G$. Since $\frakn R$ has radical $\frakm$, one may identify the modules $H^n_\frakn(R)$ and $H^n_\frakm(R)$. The transfer map~\eqref{equation:transfer:lc} is then precisely the map~$H^n_\frakm(R) \to H^n_\frakn(R^G)$ with
\[
\left[\frac{r}{y_1^d\cdots y_n^d}\right]\mapsto\left[\frac{\Tr(r)}{y_1^d\cdots y_n^d}\right],
\]
where $r\in R$, and $y_1,\dots,y_n$ is a homogeneous system of parameters for $R^G$, as above.

%%%%%%%%%%%%%%%%%%%%%%%%%%%%%%%%%%%%%%%%%%%%%%%%%%%%%%%%%%%%%%%%%%%%%%%%
\subsection*{Maps on local cohomology}
%%%%%%%%%%%%%%%%%%%%%%%%%%%%%%%%%%%%%%%%%%%%%%%%%%%%%%%%%%%%%%%%%%%%%%%%

For a local ring $(S,\frakn)$, and $M$ a finitely generated $S$-module, the local cohomology modules $H^k_\frakn(M)$ vanish for $k>\dim M$. It follows that the functor~$H^{\dim S}_\frakn(-)$ is right-exact. More generally:

\begin{lemma}
\label{lemma:locally:iso}
Let $(S,\frakn)$ be a local ring and set $n\colonequals\dim S$. Let
\[
\CD
A @>\alpha>> B @>\beta>> C @>>> 0
\endCD
\]
be a complex of finitely generated $S$-modules.
\begin{enumerate}[\quad\rm(1)]
\item\label{lemma:locally:iso:1}
If $B_\frakp\to C_\frakp$ is surjective for each prime ideal $\frakp$ with $\dim S/\frakp=n$, then the induced map $H^n_\frakn(B)\to H^n_\frakn(C)$ is surjective.

\item\label{lemma:locally:iso:2}
If $B_\frakp\to C_\frakp$ is injective for each prime ideal $\frakp$ with $\dim S/\frakp=n$, and surjective for each $\frakp$ with $\dim S/\frakp=n-1$, then $H^n_\frakn(B)\to H^n_\frakn(C)$ is an isomorphism.

\item\label{lemma:locally:iso:3}
If $B_\frakp\to C_\frakp$ is surjective for each $\frakp$ with $\dim S/\frakp=n-1$, and $A_\frakp\to B_\frakp\to C_\frakp$ is exact for each $\frakp$ with $\dim S/\frakp=n$, then the induced sequence
\[
\CD
H^n_\frakn(A) @>>> H^n_\frakn(B) @>>> H^n_\frakn(C) @>>> 0
\endCD
\]
is exact.
\end{enumerate}
\end{lemma}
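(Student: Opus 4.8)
The plan is to exploit the fact that, for $n=\dim S$, the functor $H^n_\frakn(-)$ on finitely generated $S$-modules depends on a module only ``up to dimension $n$''. I would first record the following facts, all immediate from the material in the excerpt. For finitely generated $M$, Grothendieck vanishing gives $H^n_\frakn(M)=0$ when $\dim M\le n-1$, and $H^{n-1}_\frakn(M)=0=H^n_\frakn(M)$ when $\dim M\le n-2$. For a short exact sequence $0\to M'\to M\to M''\to 0$, the exact tail
\[
H^{n-1}_\frakn(M'')\to H^n_\frakn(M')\to H^n_\frakn(M)\to H^n_\frakn(M'')\to 0
\]
(the final $0$ because $H^{n+1}_\frakn(M')=0$) shows that $H^n_\frakn(M)\cong H^n_\frakn(M'')$ whenever $\dim M'\le n-1$, and $H^n_\frakn(M')\cong H^n_\frakn(M)$ whenever $\dim M''\le n-2$; in particular $H^n_\frakn$ carries surjections to surjections. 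Finally, for finitely generated $M$ one has $\dim M\le n-1$ iff $M_\frakp=0$ for every $\frakp$ with $\dim S/\frakp=n$, and $\dim M\le n-2$ iff $M_\frakp=0$ for every $\frakp$ with $\dim S/\frakp\ge n-1$; moreover, since $\operatorname{Supp}(M)$ is closed under specialization while every prime $\frakp$ with $\dim S/\frakp=n$ lies strictly below some prime of coheight $n-1$ (apply the inequality $\height\frakq+\dim S/\frakq\le\dim S$ in the domain $S/\frakp$ of dimension $n$), the vanishing of $M_\frakp$ at all primes of coheight $n-1$ already forces $\dim M\le n-2$.

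Granting these, factor $\beta$ as the surjection $B\twoheadrightarrow\image\beta$ followed by the inclusion $\image\beta\hookrightarrow C$, and write $\coker\beta=C/\image\beta$. For (1): the hypothesis says $\coker\beta$ vanishes at every prime of coheight $n$, so $\dim\coker\beta\le n-1$ and $H^n_\frakn(\coker\beta)=0$; then $0\to\image\beta\to C\to\coker\beta\to 0$ yields a surjection $H^n_\frakn(\image\beta)\to H^n_\frakn(C)$, and composing with the surjection $H^n_\frakn(B)\to H^n_\frakn(\image\beta)$ gives the assertion. For (2): injectivity of $B_\frakp\to C_\frakp$ at coheight-$n$ primes gives $\dim\ker\beta\le n-1$, hence $H^n_\frakn(\ker\beta)=0$ and $0\to\ker\beta\to B\to\image\beta\to 0$ gives $H^n_\frakn(B)\cong H^n_\frakn(\image\beta)$; surjectivity of $B_\frakp\to C_\frakp$ at coheight-$(n-1)$ primes gives, via the specialization observation, $\dim\coker\beta\le n-2$, so $H^{n-1}_\frakn(\coker\beta)=0=H^n_\frakn(\coker\beta)$ and $0\to\image\beta\to C\to\coker\beta\to 0$ gives $H^n_\frakn(\image\beta)\cong H^n_\frakn(C)$; composing proves (2).

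For (3): the surjectivity hypothesis at coheight-$(n-1)$ primes already gives $\dim\coker\beta\le n-2$ as above, so $H^n_\frakn(\image\beta)\cong H^n_\frakn(C)$, and $0\to\ker\beta\to B\to\image\beta\to 0$ shows that $H^n_\frakn(B)\to H^n_\frakn(C)$ is surjective with kernel the image of $H^n_\frakn(\ker\beta)\to H^n_\frakn(B)$. It remains to identify this with the image of $H^n_\frakn(A)\to H^n_\frakn(B)$. The complex condition gives $\image\alpha\subseteq\ker\beta$, and exactness of $A_\frakp\to B_\frakp\to C_\frakp$ at coheight-$n$ primes gives $(\ker\beta/\image\alpha)_\frakp=0$ there, so $\dim(\ker\beta/\image\alpha)\le n-1$; then $0\to\image\alpha\to\ker\beta\to\ker\beta/\image\alpha\to 0$ gives a surjection $H^n_\frakn(\image\alpha)\to H^n_\frakn(\ker\beta)$, and $A\twoheadrightarrow\image\alpha$ gives a surjection $H^n_\frakn(A)\to H^n_\frakn(\image\alpha)$. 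Hence $H^n_\frakn(A)$, $H^n_\frakn(\image\alpha)$, and $H^n_\frakn(\ker\beta)$ all map onto the same submodule of $H^n_\frakn(B)$, namely $\ker(H^n_\frakn(B)\to H^n_\frakn(C))$, which is exactly what (3) asserts.

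The one genuine subtlety is that parts (2) and (3) require $\dim\coker\beta\le n-2$ rather than merely $\le n-1$: because $H^{n-1}_\frakn$ need not vanish on modules of dimension $n-1$, the isomorphism $H^n_\frakn(\image\beta)\cong H^n_\frakn(C)$, and with it the injectivity statements, would fail if the cokernel hypothesis were imposed only at primes of coheight $n$. Requiring surjectivity at primes of coheight $n-1$ instead is precisely calibrated so that the support/specialization argument upgrades it to $\dim\coker\beta\le n-2$; this, together with the elementary fact that each prime of coheight $n$ lies strictly below one of coheight $n-1$, is the only place where dimension theory of $S$ enters beyond formal manipulation of the long exact sequences attached to the factorizations $B\twoheadrightarrow\image\beta\hookrightarrow C$ and $A\twoheadrightarrow\image\alpha\hookrightarrow B$.
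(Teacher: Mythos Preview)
Your proof is correct and follows essentially the same approach as the paper: factor $\beta$ through its image, bound the dimensions of $\ker\beta$ and $\coker\beta$ from the localization hypotheses, and read off the conclusions from the long exact sequence in local cohomology. Two minor differences in exposition: you are more careful than the paper in justifying why vanishing of $\coker\beta$ at coheight-$(n-1)$ primes forces $\dim\coker\beta\le n-2$ (your specialization argument fills a step the paper leaves implicit), and for part~(3) you argue directly via $\ker\beta$ and $\image\alpha$ whereas the paper passes to $B/\image\alpha\to C$ and invokes part~(2) as a black box---but this is the same reasoning, just organized differently.
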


\begin{proof}
The exact sequence $B \to C \to \coker\beta \to 0$ induces
\[
\CD
H^n_\frakn(B) @>>> H^n_\frakn(C) @>>> H^n_\frakn(\coker\beta) @>>> 0.
\endCD
\]
Since $(\coker\beta)_\frakp$ vanishes for each prime $\frakp$ with $\dim S/\frakp=n$, one has $\dim(\coker\beta)<n$. But then $H^n_\frakn(\coker\beta)=0$, proving~\eqref{lemma:locally:iso:1}.

For~\eqref{lemma:locally:iso:2}, consider the exact sequences
\[
\CD
0 @>>> \ker\beta @>>> B @>>> \image\beta @>>> 0
\endCD
\]
and
\[
\CD
0 @>>> \image\beta @>>> C @>>> \coker\beta @>>> 0.
\endCD
\]
The hypothesis $(\ker\beta)_\frakp=0$ for each $\frakp$ with $\dim S/\frakp=n$ implies that $\dim(\ker\beta)<n$, so~$H^n_\frakn(\ker\beta)=0$. Using the first sequence, $H^n_\frakn(B)\to H^n_\frakn(\image\beta)$ is an isomorphism.

Similarly, since $(\coker\beta)_\frakp=0$ for each prime $\frakp$ with $\dim S/\frakp=n-1$, it follows that~$\dim(\coker\beta)<n-1$, so $H^{n-1}_\frakn(\coker\beta)=0=H^n_\frakn(\coker\beta)$. Passing to local cohomology, the second displayed sequence yields the isomorphism $H^n_\frakn(\image\beta)\to H^n_\frakn(C)$.

For~\eqref{lemma:locally:iso:3}, we may replace $A$ by its image in $B$, and then apply~\eqref{lemma:locally:iso:2} to $B/A\to C$ to obtain the isomorphism $H^n_\frakn(B/A)\to H^n_\frakn(C)$. Combine this with the exact sequence
\[
\CD
H^n_\frakn(A) @>>> H^n_\frakn(B) @>>> H^n_\frakn(B/A) @>>> 0.
\endCD
\qedhere
\]
\end{proof}

%%%%%%%%%%%%%%%%%%%%%%%%%%%%%%%%%%%%%%%%%%%%%%%%%%%%%%%%%%%%%%%%%%%%%%%%
\section{Comparing local cohomology}
%%%%%%%%%%%%%%%%%%%%%%%%%%%%%%%%%%%%%%%%%%%%%%%%%%%%%%%%%%%%%%%%%%%%%%%%

\begin{theorem}
\label{theorem:main}
For $K$ a field, let $G$ be a finite subgroup of $\GL_n(K)$, without transvections, acting on the polynomial ring~$R\colonequals K[x_1,\dots,x_n]$. Then there is an exact sequence
\[
\CD
\bigoplus\limits_{g \in G} H^n_\frakm(R) @>\alpha>> H^n_\frakm(R) @>\Tr>> H^n_\frakn(R^G) @>>> 0,
\endCD
\]
where $\frakm$ and $\frakn$ denote the respective homogeneous maximal ideals of $R$ and $R^G$, and
\[
\alpha\colon(\eta_g)_{g\in G} \mapsto \sum_{g\in G}\big(\eta_g - g(\eta_g)\big).
\]
\end{theorem}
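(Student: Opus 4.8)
The plan is to obtain the asserted sequence by applying the right-exact functor $H^n_\frakn(-)$ to an explicit complex of finitely generated $R^G$-modules and invoking Lemma~\ref{lemma:locally:iso}\eqref{lemma:locally:iso:3}. The complex I would use is
\[
\CD
\bigoplus_{g\in G} R @>\tilde\alpha>> R @>\Tr>> R^G @>>> 0,
\endCD
\]
with $\tilde\alpha\big((r_g)_{g\in G}\big)=\sum_{g\in G}\big(r_g-g(r_g)\big)$. Every term is a finitely generated $R^G$-module because $R$ is module-finite over $R^G$, and this is a complex since $\Tr(g(r))=\Tr(r)$ for all $g\in G$, so that $\Tr\circ\tilde\alpha=0$; note $\tilde\alpha$ is $R^G$-linear but not $R$-linear, which is harmless since we compute local cohomology over $R^G$. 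Applying $H^n_\frakn(-)$ and using that $\frakn R$ is $\frakm$-primary to identify $H^n_\frakn(R)=H^n_\frakm(R)$ and $H^n_\frakn\big(\bigoplus_g R\big)=\bigoplus_g H^n_\frakm(R)$, the two maps become exactly $\Tr$ and $\alpha$ from the statement; for the latter one uses the \v Cech description of $H^n_\frakm(R)$ with respect to a homogeneous system of parameters $y_1,\dots,y_n$ of $R^G$, under which $g$ sends $[r/y_1^d\cdots y_n^d]$ to $[g(r)/y_1^d\cdots y_n^d]$. So everything reduces to checking the two hypotheses of Lemma~\ref{lemma:locally:iso}\eqref{lemma:locally:iso:3} for this complex.

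The hypothesis on primes $\frakp$ of $R^G$ with $\dim R^G/\frakp=n-1$, i.e., the surjectivity of $R_\frakp\to R^G_\frakp$ at each height-one prime, is exactly Lemma~\ref{lemma:transfer}: since $\Tr(R)$ is an ideal of $R^G$ of height at least two, it avoids every height-one prime $\frakp$, so $\Tr(R)_\frakp=R^G_\frakp$; this is the only point at which the no-transvections hypothesis enters. The remaining hypothesis concerns the unique prime with $\dim R^G/\frakp=n$, namely $\frakp=(0)$, and asks that $\bigoplus_g R\to R\to R^G$ be exact after tensoring over $R^G$ with $F:=\ff(R^G)$. Here $F\otimes_{R^G}R$ is a domain, module-finite over the field $F$, hence itself a field, so it equals $L:=\ff(R)$; the extension $L/F$ is Galois with group $G$, and $\Tr$ becomes the field trace $\Tr_{L/F}$, which is onto $F$ by separability. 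The content is then the identity
\[
\operatorname{span}_F\{x-g(x)\colon x\in L,\ g\in G\}\ =\ \ker\big(\Tr_{L/F}\colon L\to F\big),
\]
a form of the additive Hilbert~90 theorem.

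To prove this identity, note that $\subseteq$ is immediate, and that $\ker\Tr_{L/F}$ has $F$-dimension $|G|-1$, so it suffices that the span have dimension at least $|G|-1$; equivalently, that the space of $F$-linear functionals on $L$ killing every $x-g(x)$ --- that is, the $G$-equivariant $F$-linear maps from $L$ to $F$ with trivial action --- be at most one-dimensional. This holds because $L$ is a free $F[G]$-module of rank one by the normal basis theorem, whence $\Hom_{F[G]}(L,F)\cong\Hom_{F[G]}(F[G],F)\cong F$. Granting this, both hypotheses of Lemma~\ref{lemma:locally:iso}\eqref{lemma:locally:iso:3} hold and the desired exact sequence follows. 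I expect the main obstacle to be precisely this generic-point computation --- isolating the right statement (additive Hilbert~90 over $F[G]$) and proving it --- together with correctly lining up the two hypotheses of Lemma~\ref{lemma:locally:iso}\eqref{lemma:locally:iso:3} with Lemma~\ref{lemma:transfer} and with this computation; once the complex $\bigoplus_g R\to R\to R^G$ has been written down, the rest is bookkeeping.
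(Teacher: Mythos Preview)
Your proposal is correct and follows essentially the same route as the paper: the same complex $\bigoplus_g R\to R\to R^G$, the same appeal to Lemma~\ref{lemma:locally:iso}\eqref{lemma:locally:iso:3}, Lemma~\ref{lemma:transfer} for height-one surjectivity, and the normal basis theorem for exactness at the generic point. The only cosmetic difference is that at the generic point the paper directly exhibits the $|G|-1$ linearly independent elements $\lambda-g(\lambda)$ from a normal basis $\{g(\lambda)\}$, whereas you dualize and compute $\Hom_{F[G]}(L,F)\cong F$; both arguments are equivalent and equally short.
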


\begin{proof}
Note that the ideal $\frakn R$ is $\frakm$-primary, so $H^n_\frakm(R)=H^n_\frakn(R)$. In view of Lemma~\ref{lemma:locally:iso}\,\eqref{lemma:locally:iso:3}, it suffices to consider the complex of $R^G$-modules
\begin{equation}
\label{equation:nbt:ring}
\CD
\bigoplus\limits_{g \in G} R @>\alpha>> R @>\Tr>> R^G @>>> 0,
\endCD
\end{equation}
where
\[
\alpha\colon(r_g)_{g\in G} \mapsto \sum_{g\in G}\big(r_g - g(r_g)\big),
\]
and verify that $\Tr\colon R\to R^G$ is surjective after localizing at each height one prime $\frakp$ of~$R^G$, and that the sequence~\eqref{equation:nbt:ring} is exact upon tensoring with the fraction field of $R^G$. The surjectivity of $\Tr\colon R\to R^G$ at height one primes comes from Lemma~\ref{lemma:transfer}. For the second verification, let $L$ denote the fraction field of $R$, in which case~$L^G=\ff(R^G)$ as $G$ is finite. We then need to verify the exactness of the sequence
\begin{equation}
\label{equation:nbt:field}
\CD
\bigoplus\limits_{g \in G} L @>\alpha>> L @>\Tr>> L^G @>>> 0.
\endCD
\end{equation}
But $\Tr\colon L\to L^G$ is a surjective map of $L^G$-vector spaces, so its kernel is an $L^G$-vector space of rank $|G|-1$. By the normal basis theorem, there exists $\lambda\in L$ such that
\[
\{g(\lambda)\mid g\in G\}
\]
is an $L^G$-basis for $L$. But then the image of $\alpha$ in~\eqref{equation:nbt:field} contains the $|G|-1$ linearly independent elements $\lambda-g(\lambda)$, as $g$ varies over the nonidentity elements of $G$.
\end{proof}

\begin{remark}
\label{remark:peskin}
Theorem~\ref{theorem:main} admits a dual formulation that extends \cite[Theorem~2.7]{Peskin} as follows. Suppose~$G$ contains no transvections. Using $(-)^*$ for the graded $K$-dual, one has~$H^n_\frakm(R)^*=\omega_R$ and $H^n_\frakn(R^G)^*=\omega_{R^G}$ so Theorem~\ref{theorem:main} yields the exact sequence
\[
\CD
0 @>>> \omega_{R^G} @>>> \omega_R  @>{\alpha^*}>> \bigoplus\limits_{g \in G} \omega_R.
\endCD
\]
Endowing $\omega_R$ with the $G$-action induced by the identification $\omega_R=H^n_\frakm(R)^*$, one has $\ker\alpha^* \cong (\omega_R)^G$. The exact sequence above then gives
\[
\omega_{R^G}\ \cong\ (\omega_R)^G.
\]
This does not require the hypothesis that $R^G$ is Cohen-Macaulay, assumed in \cite{Peskin}.
\end{remark}

\begin{remark}
\label{remark:generators}
In the statement of Theorem~\ref{theorem:main}, one may replace $\bigoplus\limits_{g \in G} H^n_{\frakm}(R)$ by the direct sum over a generating set for $G$, and $\alpha$ by its restriction: if $g,h\in G$, then
\[
(1-hg)(\eta)\ =\ (1-g)(\eta) + (1-h)(g(\eta)).
\]
\end{remark}

The hypothesis that $G$ does not contain transvections is indeed required in Theorem~\ref{theorem:main}:

\begin{example}
Consider the symmetric group $S_2=\langle g\rangle$ acting on $R\colonequals K[x,y]$ by permuting the variables. Then $R^{S_2}=K[e_1,e_2]$, where $e_1\colonequals x+y$ and $e_2\colonequals xy$. While $g$ is a pseudo\-reflection independent of the characteristic of $K$, it is a transvection if and only if $K$ has characteristic two. We examine the complex
\begin{equation}
\label{equation:s2}
\CD
H^2_\frakm(R) @>1-g>> H^2_\frakm(R) @>\Tr>> H^2_\frakn(R^{S_2}) @>>> 0
\endCD
\end{equation}
in degree $-2$. Note that ${[H^2_\frakn(R^{S_2})]}_{-2}=0$, while ${[H^2_\frakm(R)]}_{-2}$, computed via the \v Cech complex on $e_1,e_2$, is the rank one $K$-vector space spanned by
\[
\eta\colonequals\left[\frac{x}{e_1e_2}\right].
\]
Since
\[
(1-g)(\eta)\ =\ \left[\frac{x-y}{e_1e_2}\right]\ =\ \left[\frac{2x}{e_1e_2}\right]\ =\ 2\eta,
\]
the degree $-2$ strand of~\eqref{equation:s2} takes the form
\[
\CD
K @>2>> K @>>> 0 @>>> 0,
\endCD
\]
which is exact precisely when the characteristic of $K$ is other than two, i.e., precisely when the group contains no transvections.
\end{example}

%%%%%%%%%%%%%%%%%%%%%%%%%%%%%%%%%%%%%%%%%%%%%%%%%%%%%%%%%%%%%%%%%%%%%%%%
\section{When is the $a$-invariant invariant?}
\label{section:a:invariant}
%%%%%%%%%%%%%%%%%%%%%%%%%%%%%%%%%%%%%%%%%%%%%%%%%%%%%%%%%%%%%%%%%%%%%%%%

We record in this section when the $a$-invariant of a ring of invariants coincides with that of the ambient polynomial ring. The following proposition is likely well-known to experts, for example, it is an extension of \cite[Lemma~2.17]{Jeffries:thesis}; see also~\cite[Theorem~1.1]{KPU}.

\begin{proposition}
\label{prop:inequality}
Let $G$ be a finite subgroup of $\GL_n(K)$, acting on a polynomial ring~$R$. Then, for each subgroup $H$ of $G$, one has $a(R^G) \le a(R^H)$.
\end{proposition}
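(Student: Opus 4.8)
The plan is to exhibit a degree-preserving surjection of graded $R^G$-modules
\[
H^n_{\frakn_H}(R^H)\ \twoheadrightarrow\ H^n_{\frakn_G}(R^G),
\]
where $\frakn_G$ and $\frakn_H$ denote the homogeneous maximal ideals of $R^G$ and $R^H$, and $n=\dim R=\dim R^H=\dim R^G$ (the dimensions agree since $R$ is module-finite, hence integral, over each of $R^G$ and $R^H$). Granting such a surjection, the proposition follows immediately: it restricts in each degree $d$ to a surjection ${[H^n_{\frakn_H}(R^H)]}_d\twoheadrightarrow{[H^n_{\frakn_G}(R^G)]}_d$, so ${[H^n_{\frakn_G}(R^G)]}_d\neq0$ forces ${[H^n_{\frakn_H}(R^H)]}_d\neq0$; taking $d=a(R^G)$ gives $a(R^H)\ge a(R^G)$.

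The surjection is built from the transfer. Since $R^G\subseteq R^H\subseteq R$ and, as in the proof of Lemma~\ref{lemma:transfer}, the transfer factors as $\Tr^G=\Tr_H^G\circ\Tr^H\colon R\to R^H\to R^G$, and since all maps in sight are $R^G$-linear, applying $H^n_{\frakn_G}(-)$ and using that $\frakn_G R$ and $\frakn_H R$ have radical $\frakm$ while $\frakn_G R^H$ has radical $\frakn_H$, we obtain a factorization
\[
\CD
H^n_\frakm(R) @>\Tr^H>> H^n_{\frakn_H}(R^H) @>\Tr_H^G>> H^n_{\frakn_G}(R^G)
\endCD
\]
of the transfer map $H^n_\frakm(R)\to H^n_{\frakn_G}(R^G)$. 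This composite is surjective: as recalled in the introduction, or directly --- localizing at the zero ideal of $R^G$ turns $\Tr^G$ into the trace map of the Galois, hence separable, extension $\ff(R^G)\subseteq\ff(R)$, which is nonzero, so $I\colonequals\image(\Tr^G\colon R\to R^G)$ is a nonzero, hence height-positive, ideal of $R^G$; then $R\twoheadrightarrow I$ and $H^n_{\frakn_G}(R^G/I)=0$ yield, by right exactness of $H^n_{\frakn_G}(-)$, the surjection $H^n_{\frakn_G}(R)\twoheadrightarrow H^n_{\frakn_G}(R^G)$. Since the composite is onto, so is the second map $\Tr_H^G\colon H^n_{\frakn_H}(R^H)\to H^n_{\frakn_G}(R^G)$.

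Finally, this last map is degree-preserving. Choose a homogeneous system of parameters $y_1,\dots,y_n$ for $R^G$; as $R^H$ is a finite $R^G$-module, the same sequence is a homogeneous system of parameters for $R^H$. Identify $H^n_{\frakn_H}(R^H)$ and $H^n_{\frakn_G}(R^G)$ with the top \v Cech cohomologies of $R^H$ and $R^G$ on $y_1,\dots,y_n$ --- these carry the same gradings as the local cohomology modules, independently of which ideal one uses to compute them. Under this identification $\Tr_H^G$ sends $\left[r/(y_1\cdots y_n)^d\right]$, with $r\in R^H$, to $\left[\Tr_H^G(r)/(y_1\cdots y_n)^d\right]$, and since every element of $\GL_n(K)$ preserves total degree, $\Tr_H^G(r)$ is homogeneous of the same degree as $r$. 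Hence the map preserves degrees, which completes the argument. The only input beyond formal manipulation is the surjectivity of the transfer on top local cohomology --- a mild fact resting on the separability of $\ff(R)/\ff(R^G)$ and right exactness of $H^n_{\frakn}(-)$; the remaining care goes into keeping the several identifications of local cohomology modules grading-compatible.
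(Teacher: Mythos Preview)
Your proof is correct and follows essentially the same approach as the paper: both produce a degree-preserving surjection $H^n_{\frakn_H}(R^H)\twoheadrightarrow H^n_{\frakn_G}(R^G)$ via the relative transfer $\Tr_H^G$, using that the transfer has nonzero image (so the cokernel has dimension $<n$) together with right exactness of $H^n_{\frakn}(-)$. The only organizational difference is that the paper shows $\Tr_H^G\colon R^H\to R^G$ is nonzero directly via Dedekind's independence of characters, whereas you show the full transfer $\Tr^G\colon R\to R^G$ is nonzero (as the trace of the Galois extension $\ff(R)/\ff(R^G)$) and then deduce surjectivity of $H^n(\Tr_H^G)$ from the factorization $\Tr^G=\Tr_H^G\circ\Tr^H$; both routes rest on the same linear-independence/separability fact.
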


\begin{proof}
Consider the transfer map $\Tr_H^G\colon R^H \to R^G$ given by
\begin{equation}
\label{equation:transfer}
\Tr_H^G(r)\colonequals\!\!\!\!\!\!\sum_{gH\in G/H}\!\!\!\!\!\!g(r).
\end{equation}
Let $L$ denote the fraction field of $R$. Since $G$ and $H$ are finite, one has $L^G=\ff(R^G)$ and~$L^H=\ff(R^H)$. Distinct cosets $gH$ induce distinct automorphisms $g\colon L^H\to L^H$, so Dedekind's theorem implies that the corresponding characters ${(L^H)}^\times\to{(L^H)}^\times$ are linearly independent over $L^H$, and hence over $L^G$. It follows that their sum
\[
\sum g\colon {(L^H)}^\times\to L^H,
\]
taken over coset representatives, is a nonzero map, and hence that the transfer map~\eqref{equation:transfer} is nonzero. As the transfer is~$R^G$-linear, one has an exact sequence of $R^G$-modules
\[
\CD
R^H @>{\Tr_H^G}>> R^G @>>> R^G/\image(\Tr^G_H) @>>> 0.
\endCD
\]
Applying the functor $H^n_\frakn(-)$, one obtains the surjection
\[
\CD
H^n_\frakn(R^H) @>{\Tr_H^G}>> H^n_\frakn(R^G),
\endCD
\]
since $R^G/\image(\Tr^G_H)$ has smaller dimension. The homogeneous maximal ideals of $R^H$ and~$R^G$ agree up to radical, so the assertion follows.
\end{proof}

The following is~\cite[Theorem~2.18]{Jeffries:thesis}, and also related to work of Broer~\cite{Broer}:

\begin{corollary}
\label{corollary:jack}
Let $K$ be a field of characteristic $p>0$, and $G$ a finite subgroup of $\GL_n(K)$ acting on a polynomial ring~$R\colonequals K[x_1,\dots,x_n]$. If $a(R^G)=a(R)$, and $p$ divides the order of~$G$, then the inclusion $R^G\subseteq R$ is not $R^G$-split.
\end{corollary}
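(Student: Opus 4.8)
The plan is to argue by contradiction, pitting a hypothetical splitting against the transfer map in the top graded degree of $H^n_\frakm(R)$. So suppose $\iota\colon R^G\hookrightarrow R$ admits an $R^G$-linear retraction $\rho$. Since $\frakn R$ is $\frakm$-primary one has $H^n_\frakn(R)=H^n_\frakm(R)$, and applying the functor $H^n_\frakn(-)$ to $\rho\circ\iota=\mathrm{id}_{R^G}$ makes the induced map $j\colon H^n_\frakn(R^G)\to H^n_\frakm(R)$ a split injection. On the other hand, the argument in the proof of Proposition~\ref{prop:inequality}, taken with $H$ the trivial subgroup, shows that $\Tr$ induces a \emph{surjection} $H^n_\frakm(R)\to H^n_\frakn(R^G)$: the trace $L\to L^G$ is nonzero, so the cokernel of $\Tr\colon R\to R^G$ has dimension less than $n$, and $H^n_\frakn(-)$ is right exact.

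Next I would compute the round trip. The composite $R^G\xrightarrow{\iota}R\xrightarrow{\Tr}R^G$ is multiplication by $|G|$, hence the zero endomorphism of the $K$-algebra $R^G$, since $p=\operatorname{char}K$ divides $|G|$; applying $H^n_\frakn(-)$ yields $\Tr\circ j=0$. Now I pinch in degree $-n$. Since $R$ is a standard-graded polynomial ring, ${[H^n_\frakm(R)]}_j=0$ for $j>-n$ and ${[H^n_\frakm(R)]}_{-n}$ is one-dimensional (immediate from the \v Cech description), so $a(R)=-n$, and the hypothesis gives ${[H^n_\frakn(R^G)]}_{-n}\neq0$. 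As $j$ is degree-preserving and injective, $j_{-n}$ embeds a nonzero space into a one-dimensional one, hence is an isomorphism onto ${[H^n_\frakm(R)]}_{-n}$; then $\Tr_{-n}\circ j_{-n}=0$ forces $\Tr_{-n}=0$. But $\Tr_{-n}\colon{[H^n_\frakm(R)]}_{-n}\to{[H^n_\frakn(R^G)]}_{-n}$ is surjective, so ${[H^n_\frakn(R^G)]}_{-n}=0$, contradicting $a(R^G)=-n$.

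The content of the argument is concentrated in the identity $\Tr\circ\iota=|G|\cdot\mathrm{id}$, whose vanishing is exactly where the modular hypothesis $p\mid|G|$ enters; this is what makes the split injection $j$ incompatible with the surjection $\Tr$ once one restricts to the one-dimensional top degree of $H^n_\frakm(R)$. The one step deserving care is that $\Tr$ surjects onto $H^n_\frakn(R^G)$ for an \emph{arbitrary} finite $G$, in particular without assuming $G$ has no transvections, so that the corollary holds in the stated generality; but this follows from right-exactness of $H^n_\frakn(-)$ together with nonvanishing of the (separable) trace $L\to L^G$, precisely as in Proposition~\ref{prop:inequality}.
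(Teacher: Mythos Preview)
Your proof is correct and follows essentially the same route as the paper: both restrict the composite $\Tr\circ\iota=|G|\cdot\mathrm{id}$ to the rank-one degree $-n$ piece and combine the vanishing with surjectivity of $\Tr_{-n}$ (from Proposition~\ref{prop:inequality}) to contradict injectivity of the map on local cohomology induced by $\iota$. The only difference is cosmetic---the paper argues directly that the induced map $i_{-n}$ is zero (hence no splitting), whereas you phrase the same computation as a contradiction.
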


\begin{proof}
Consider the maps of rank one $K$-vector spaces
\[
\CD
{[H^n_\frakn(R^G)]}_{-n} @>i>> {[H^n_\frakm(R)]}_{-n} @>\Tr>> {[H^n_\frakn(R^G)]}_{-n},
\endCD
\]
where $i$ is induced by the inclusion $R^G\subseteq R$. The composition is then multiplication by~$|G|$, which equals zero in $K$. As $\Tr$ above is surjective, the map $i$ must be zero. But then the inclusion $R^G\subseteq R$ is not $R^G$-split.
\end{proof}

\begin{remark}
\label{remark:action}
Let $G$ be a finite subgroup of $\GL_n(K)$, acting on~$R\colonequals K[x_1,\dots,x_n]$. We claim that for each $g\in G$ and $\eta\in{[H^n_\frakm(R)]}_{-n}$, one has
\[
g\cdot\eta\ =\ {(\det g)}^{-1}\eta.
\]

Since ${[H^n_\frakm(R)]}_{-n}$ has rank one, without loss of generality, take $\eta$ to be
\[
\left[\frac{1}{x_1\cdots\,x_n}\right].
\]
If $f_1,\dots,f_n$ is a homogeneous system of parameters for $R$, the natural isomorphism between \v Cech and local cohomology induces a natural isomorphism between the \v Cech cohomology modules $\check H^n(x_1,\dots,x_n;\,R)$ and $\check H^n(f_1,\dots,f_n;\,R)$. To make this explicit, following~\cite[Theorem~4.18]{Kunz}, let $A$ be a matrix over $R$, such that
\[
\begin{bmatrix}
f_1 \\ \vdots\\ f_n
\end{bmatrix}
\ =\ A
\begin{bmatrix}
x_1 \\ \vdots\\ x_n
\end{bmatrix}.
\]
Then, under the isomorphism $\check H^n(x_1,\dots,x_n;\,R)\to \check H^n(f_1,\dots,f_n;\,R)$, one has 
\[
\left[\frac{1}{x_1\cdots\,x_n}\right] \mapsto \left[\frac{\det A}{f_1\cdots f_n}\right].
\]
It follows that
\[
g\cdot\left[\frac{1}{x_1\cdots\,x_n}\right]\ =\ \left[\frac{1}{g(x_1)\cdots g(x_n)}\right],
\]
viewed as an element of $\check H^n(g(x_1),\dots,g(x_n);\,R)$, corresponds to
\[
\left[\frac{{(\det g)}^{-1}}{x_1\cdots\,x_n}\right]\ =\ {(\det g)}^{-1}\eta
\]
in $\check H^n(x_1,\dots,x_n;\,R)$.
\end{remark}

The following theorem has been obtained independently by Hashimoto~\cite{Hashimoto}:

\begin{theorem}
\label{theorem:a:invariant}
For $K$ a field, let $G$ be a finite subgroup of $\GL_n(K)$ acting on the polynomial ring~$R\colonequals K[x_1,\dots,x_n]$. Then $a(R^G)=a(R)$ if and only if $G$ is a subgroup of~$\SL_n(K)$ that contains no pseudoreflections.
\end{theorem}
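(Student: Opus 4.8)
The plan is to prove both implications by analyzing the degree $-n$ strand of the local cohomology, where $a(R) = -n$ since $H^n_\frakm(R)$ is concentrated in degrees $\le -n$ with a one-dimensional top component. Because $a(R^G) \le a(R^H) = a(R) = -n$ for $H$ the trivial subgroup by Proposition~\ref{prop:inequality}, and because local cohomology $a$-invariants of invariant rings can only go down, the equality $a(R^G) = a(R)$ holds if and only if $[H^n_\frakn(R^G)]_{-n} \ne 0$. (Here one uses that $a(R^G) \ge -n$ always, which follows since the Hilbert series argument or the surjection from the transfer shows $H^n_\frakn(R^G)$ lives in degrees $\ge -n$; more cleanly, $R^G \subseteq R$ is a finite extension of normal domains of the same dimension, so $a(R^G) \ge a(R)$ need not hold a priori, but one sees $a(R^G)\ge -n$ from the fact that $\omega_{R^G}$ has no elements of positive degree shift — I will verify this via the containment of Hilbert functions.) The real content is then to determine exactly when $[H^n_\frakn(R^G)]_{-n}$ vanishes.

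For the easier direction, suppose $G \subseteq \SL_n(K)$ with no pseudoreflections. No pseudoreflections means in particular no transvections, so Theorem~\ref{theorem:main} applies and gives the exact sequence $\bigoplus_{g\in G} H^n_\frakm(R) \xrightarrow{\alpha} H^n_\frakm(R) \xrightarrow{\Tr} H^n_\frakn(R^G) \to 0$. I take degree $-n$ components. By Remark~\ref{remark:action}, each $g$ acts on the rank-one space $[H^n_\frakm(R)]_{-n}$ by $(\det g)^{-1} = 1$ since $G \subseteq \SL_n(K)$. Hence $\eta_g - g(\eta_g) = 0$ for all $g$, so $\alpha$ is the zero map in degree $-n$, and therefore $\Tr$ induces an isomorphism $[H^n_\frakm(R)]_{-n} \xrightarrow{\sim} [H^n_\frakn(R^G)]_{-n}$. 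In particular $[H^n_\frakn(R^G)]_{-n} \ne 0$, so $a(R^G) \ge -n = a(R)$, and combined with $a(R^G) \le a(R)$ we get equality.

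For the converse, suppose $a(R^G) = a(R) = -n$, so $[H^n_\frakn(R^G)]_{-n} \ne 0$. First I rule out pseudoreflections: if $g \in G$ is a pseudoreflection, consider the subgroup $H = \langle g \rangle$, whose order is prime to $p$ (it cannot be a transvection argument is subtler — a pseudoreflection of order divisible by $p$ is a transvection, but if $p \nmid |H|$ we are in the nonmodular case for $H$). Actually the cleanest route: for any pseudoreflection $g$, the fixed ring $R^{\langle g\rangle}$ is again polynomial (Shephard–Todd–Chevalley in the nonmodular case; in the modular transvection case one checks directly that $R^{\langle g \rangle}$ is polynomial), and the degree of the nontrivial invariant/relative invariant forces $a(R^{\langle g\rangle}) > -n$, whence $a(R^G) \le a(R^{\langle g \rangle}) < -n$ by Proposition~\ref{prop:inequality}, a contradiction. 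So $G$ has no pseudoreflections, hence no transvections, and Theorem~\ref{theorem:main} applies. Now if $G \not\subseteq \SL_n(K)$, pick $g$ with $\det g \ne 1$; then on $[H^n_\frakm(R)]_{-n} \cong K$ the map $1 - g$ is multiplication by $1 - (\det g)^{-1} \ne 0$, hence an isomorphism, so by Remark~\ref{remark:generators} the image of $\alpha$ in degree $-n$ is all of $[H^n_\frakm(R)]_{-n}$, forcing $[H^n_\frakn(R^G)]_{-n} = 0$, contradiction. Therefore $G \subseteq \SL_n(K)$ with no pseudoreflections.

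The main obstacle I anticipate is the pseudoreflection step in the converse: handling the modular (transvection) case requires knowing that $R^{\langle g \rangle}$ is still a polynomial ring with one generator of degree $> 1$ (so that its $a$-invariant strictly exceeds $-n$), and I should cite or supply the short computation — for a transvection fixing the hyperplane $x_1 = 0$, one has $g(x_1) = x_1$, $g(x_i) = x_i$ for $i \ge 3$, and $g(x_2) = x_2 + x_1$, giving invariant ring $K[x_1, x_2^p - x_1^{p-1}x_2, x_3,\dots,x_n]$ in characteristic $p$, whose $a$-invariant is $-(n-1) - p = -n - (p-1) < -n$; wait, that makes it smaller, which still contradicts $a(R^G) = -n$ only after checking $a(R^G) \le a(R^{\langle g\rangle})$, which is exactly Proposition~\ref{prop:inequality}. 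So the inequality direction is fine; the only care needed is the explicit generator degrees. The nonmodular pseudoreflection case is the classical Chevalley computation and is standard.
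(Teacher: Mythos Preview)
Your proposal is correct and follows essentially the same approach as the paper: reduce the pseudoreflection case to a cyclic group via Proposition~\ref{prop:inequality}, compute $R^{\langle g\rangle}$ explicitly as a polynomial ring with strictly smaller $a$-invariant in both the diagonalizable and transvection cases, and then use the degree $-n$ strand of Theorem~\ref{theorem:main} together with Remark~\ref{remark:action} to characterize when $G\subseteq\SL_n(K)$. The parenthetical musing about $a(R^G)\ge -n$ is unnecessary and you rightly abandon it; all that is needed is $a(R^G)\le a(R)=-n$ from Proposition~\ref{prop:inequality}, which already makes $a(R^G)=-n$ equivalent to ${[H^n_\frakn(R^G)]}_{-n}\ne 0$.
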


\begin{proof}
We first show that if $G$ contains a pseudoreflection, then $a(R^G)<a(R)$. In view of Proposition~\ref{prop:inequality}, it suffices to consider the case where $G$ is a cyclic group, generated by a pseudoreflection $g$. After extending scalars, we may assume that $g$ takes the form
\[
\left[
\begin{array}{c|cccc}
\zeta & 0 & 0 & \cdots & 0 \\
\hline
0 & 1 & 0 & \cdots & 0 \\
0 & 0 & 1 & & 0 \\
\vdots & \vdots & & \ddots & \\
0 & 0 & 0 & & 1
\end{array}
\right]
\quad\text{ or }\quad
\left[
\begin{array}{cc|ccc}
1 & 1 & 0 & \cdots & 0 \\
0 & 1 & 0 & \cdots & 0 \\
\hline
0 & 0 & 1 & & 0 \\
\vdots & \vdots & & \ddots & \\
0 & 0 & 0 & & 1
\end{array}
\right],
\]
where $\zeta$ is a primitive $k$-th root of unity. In the first case, $R^G=K[x_1^k,\, x_2,\dots,x_n]$, and in the second $R^G=K[x_1^p-x_1x_2^{p-1},\, x_2,\dots,x_n]$, where $p>0$ is the characteristic of $K$. In each case $R^G$ is a polynomial ring, with $a(R^G)$ strictly less than $a(R)$.

It remains to verify that if $G$ has no pseudoreflections, then $a(R^G)=a(R)$ if and only if~$G$ is a subgroup of $\SL_n(K)$. The exact sequence from Theorem~\ref{theorem:main}, when restricted to the degree $-n$ strand, gives an exact sequence of $K$-vector spaces
\[
\CD
\bigoplus\limits_{g \in G} {[H^n_\frakm(R)]}_{-n} @>{\alpha}>> {[H^n_\frakm(R)]}_{-n} @>\Tr>> {[H^n_\frakn(R^G)]}_{-n} @>>> 0.
\endCD
\]
Since ${[H^n_\frakm(R)]}_{-n}$ is a rank one vector space, it follows that $a(R^G)=-n$ if and only if the map $\alpha$ above is identically zero, i.e., if and only if the map
\[
\CD
{[H^n_\frakm(R)]}_{-n} @>{1-g}>> {[H^n_\frakm(R)]}_{-n}
\endCD
\]
is zero for each $g\in G$. Taking
\[
\eta\colonequals\left[\frac{1}{x_1\cdots\,x_n}\right]
\]
as in Remark~\ref{remark:action}, this is equivalent to the condition that
\[
\eta-g(\eta)\ =\ \eta-(\det g)^{-1}\eta
\]
is zero for each $g$, i.e., that $\det g=1$ for each $g\in G$.
\end{proof}

%%%%%%%%%%%%%%%%%%%%%%%%%%%%%%%%%%%%%%%%%%%%%%%%%%%%%%%%%%%%%%%%%%%%%%%%
\section{Hilbert series of local cohomology}
\label{section:hilbert}
%%%%%%%%%%%%%%%%%%%%%%%%%%%%%%%%%%%%%%%%%%%%%%%%%%%%%%%%%%%%%%%%%%%%%%%%

Theorem~\ref{theorem:main} has an amusing consequence for the Hilbert series of local cohomology:

\begin{corollary}
\label{corollary:hilbert}
For $K$ a field, let $G$ be a finite cyclic subgroup of $\GL_n(K)$, without transvections, acting on the polynomial ring~$R\colonequals K[x_1,\dots,x_n]$. Then the Hilbert series of $H^n_\frakn(R^G)$ and $H^n_\frakm(R)^G$ coincide, i.e., for each integer $k$, one has
\[
\rank_K{[H^n_\frakn(R^G)]}_k\ =\ \rank_K{[H^n_\frakm(R)^G]}_k.
\]
\end{corollary}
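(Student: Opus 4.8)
The plan is to exploit the exact sequence of Theorem~\ref{theorem:main} together with the cyclicity of $G$, which lets us replace the direct sum over all of $G$ by a single copy via Remark~\ref{remark:generators}. Write $G=\langle g\rangle$. By Remark~\ref{remark:generators}, Theorem~\ref{theorem:main} gives an exact sequence of $K$-vector spaces in each degree $k$:
\[
\CD
{[H^n_\frakm(R)]}_k @>{1-g}>> {[H^n_\frakm(R)]}_k @>{\Tr}>> {[H^n_\frakn(R^G)]}_k @>>> 0.
\endCD
\]
Hence, for every $k$,
\[
\rank_K{[H^n_\frakn(R^G)]}_k\ =\ \rank_K\coker\big(1-g\colon {[H^n_\frakm(R)]}_k\to{[H^n_\frakm(R)]}_k\big).
\]
On the other hand, by definition $\rank_K{[H^n_\frakm(R)^G]}_k=\rank_K\ker\big(1-g\colon {[H^n_\frakm(R)]}_k\to{[H^n_\frakm(R)]}_k\big)$. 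So the corollary reduces to the purely linear-algebraic claim that on the $K$-vector space $V_k\colonequals{[H^n_\frakm(R)]}_k$, the endomorphism $1-g$ has equal-dimensional kernel and cokernel.

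The first step is to observe that each $V_k$ is \emph{finite-dimensional} over $K$: the local cohomology $H^n_\frakm(R)$ is graded-isomorphic to a shifted copy of $R^*$ (in fact, $H^n_\frakm(R)\cong R^*(-n)$ since $R$ is Gorenstein with $a$-invariant $-n$), so each graded piece $V_k$ is a finite-dimensional $K$-vector space carrying a $K$-linear action of the finite cyclic group $G$. For a single endomorphism $\varphi$ of a finite-dimensional vector space, $\dim\ker\varphi=\dim\coker\varphi$ automatically, since both equal $\dim V_k-\rank\varphi$. Applying this with $\varphi=1-g$ on $V_k$ immediately yields
\[
\rank_K\ker(1-g)\ =\ \rank_K\coker(1-g),
\]
and combining with the two displayed identities above finishes the proof.

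The only real subtlety is justifying the finite-dimensionality of $V_k$ and the identification of the $G$-invariants ${[H^n_\frakm(R)^G]}_k$ with $\ker(1-g)$ on $V_k$; both are standard. Finite-dimensionality follows because $H^n_\frakm(R)$ is a graded $R$-module that is Artinian (being the top local cohomology of a Cohen-Macaulay ring with respect to its maximal ideal), so each graded component is finite-dimensional over~$K$; concretely, one can use $H^n_\frakm(R)\cong R^*(-n)$, whose degree-$k$ piece is the $K$-dual of $R_{-n-k}$. The identification of invariants with the kernel of $1-g$ is just the definition of the $G$-module structure on $H^n_\frakm(R)$ recalled in \S\ref{section:preliminary}, combined with the fact that a vector is $G$-fixed if and only if it is fixed by the generator $g$. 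I expect no genuine obstacle here; the one point to state carefully is that Remark~\ref{remark:generators} legitimately collapses the source $\bigoplus_{g\in G}H^n_\frakm(R)$ to a single $H^n_\frakm(R)$ with the map $1-g$, which uses precisely that $G$ is generated by the single element $g$. Note that the corollary genuinely uses cyclicity: for a non-cyclic $G$ the source of $\alpha$ has several summands and there is no reason for $\dim\ker\alpha$ to equal $\dim\coker\alpha$ in a fixed degree, since the domain and codimension of $\alpha$ differ in dimension.
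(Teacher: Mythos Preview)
Your proof is correct and follows essentially the same approach as the paper: both apply Theorem~\ref{theorem:main} and Remark~\ref{remark:generators} to obtain the three-term exact sequence with map $1-g$, identify $H^n_\frakm(R)^G$ with $\ker(1-g)$, and then use a rank count in each (finite-dimensional) degree. The paper phrases the last step as ``alternating sum of ranks in a four-term exact sequence is zero,'' which is exactly your $\dim\ker=\dim\coker$ observation for an endomorphism of a finite-dimensional space.
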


\begin{proof}
Let $G=\langle g \rangle$. Then, by Theorem~\ref{theorem:main} and Remark~\ref{remark:generators}, one has an exact sequence
\[
\CD
H^n_\frakm(R) @>{1-g}>> H^n_\frakm(R) @>\Tr>> H^n_\frakn(R^G) @>>> 0.
\endCD
\]
But the kernel of the first map is precisely $H^n_{\frakm}(R)^G$, so
\[
\CD
0 @>>> H^n_{\frakm}(R)^G @>>> H^n_\frakm(R) @>{1-g}>> H^n_\frakm(R) @>\Tr>> H^n_\frakn(R^G) @>>> 0
\endCD
\]
is exact. Taking the degree $k$ strand, the alternating sum of the ranks is zero.
\end{proof}

We will see in Example~\ref{example:klein} that the equality of Hilbert series need not hold when~$G$ is not cyclic; however, before that, it is worth emphasizing that both $H^n_\frakn(R^G)$ and $H^n_\frakm(R)^G$ are graded~$R^G$-modules, and Corollary~\ref{corollary:hilbert} says precisely that they are isomorphic as graded~$K$-vector spaces. They need not be isomorphic as $R^G$-modules:

\begin{example}
\label{example:alternating}
Consider the alternating group $A_3$ acting on $R\colonequals\FF_3[x,y,z]$ by permuting the variables. The ring of invariants $R^{A_3}$ is then generated by the elements
\[
e_1\colonequals x+y+z,\quad e_2\colonequals xy+yz+zx,\quad e_3\colonequals xyz,\quad \Delta\colonequals x^2y+y^2z+z^2x.
\]
It follows that $R^{A_3}$ is a hypersurface; the defining equation is readily seen to be
\[
\Delta^2 - e_1e_2\Delta + e_2^3 + e_1^3e_3.
\]
Taking a \v Cech complex on $e_1,e_2,e_3$, the socle of the $R^{A_3}$-module $H^3_\frakn(R^{A_3})$ is the rank one vector space spanned by the cohomology class
\[
\eta\colonequals\left[\frac{\Delta}{e_1e_2e_3}\right].
\]
Note that $\eta$ belongs to the kernel of the natural map $H^3_\frakn(R^{A_3})\to H^3_\frakn(R)$ since $R^{A_3}\to R$ is not $R^{A_3}$-split; alternatively, it is a routine verification that
\[
\Delta\ \in\ (e_1, e_2, e_3)R.
\]
We claim that, in contrast with $H^3_\frakn(R^{A_3})$, the socle of $H^3_\frakn(R)^{A_3}$, as an $R^{A_3}$-module, has larger rank: for this, one may verify that the elements
\[
\left[\frac{x\Delta}{e_1^2e_2e_3}\right],\quad
\left[\frac{\Delta}{e_1^2e_2e_3}\right],\quad
\left[\frac{\Delta}{e_1e_2^2e_3}\right],\quad
\left[\frac{1}{e_1e_2e_3}\right],
\]
are all nonzero in $H^3_\frakn(R)$, that they are $A_3$-invariant, and that they are annihilated by the ideal $(e_1, e_2, e_3, \Delta)R^{A_3}$. Note that they have degrees $-3$, $-4$, $-5$, $-6$ respectively.
\end{example}

The equality of Hilbert series, Corollary~\ref{corollary:hilbert}, fails for an action of the Klein-$4$ group:
 
\begin{example}
\label{example:klein}
The following matrices over $\FF_2$ generate the Klein-$4$ group:
\[
\begin{bmatrix}
1 & 0 & 0\\
0 & 1 & 1\\
0 & 0 & 1
\end{bmatrix}
\quad\text{ and }\quad
\begin{bmatrix}
1 & 0 & 1\\
0 & 1 & 0\\
0 & 0 & 1
\end{bmatrix}.
\]
Each of these is a transvection; the invariant ring for this action of the Klein-$4$ group on~$\FF_2[x,y,z]$ is the polynomial ring
\[
\FF_2[z,\ x^2 + xz,\ y^2 + yz].
\]
The situation is more interesting if we take the $2$-fold diagonal embedding, i.e., if we consider the representation of the Klein-$4$ group, over $\FF_2$, determined by the matrices:
\[
g\colonequals
\left[
\begin{array}{ccc|ccc}
1 & 0 & 0 & 0 & 0 & 0\\
0 & 1 & 1 & 0 & 0 & 0\\
0 & 0 & 1 & 0 & 0 & 0\\
\hline
0 & 0 & 0 & 1 & 0 & 0\\
0 & 0 & 0 & 0 & 1 & 1\\
0 & 0 & 0 & 0 & 0 & 1
\end{array}
\right]
\quad\text{ and }\quad
h\colonequals
\left[
\begin{array}{ccc|ccc}
1 & 0 & 1 & 0 & 0 & 0\\
0 & 1 & 0 & 0 & 0 & 0\\
0 & 0 & 1 & 0 & 0 & 0\\
\hline
0 & 0 & 0 & 1 & 0 & 1\\
0 & 0 & 0 & 0 & 1 & 0\\
0 & 0 & 0 & 0 & 0 & 1
\end{array}
\right].
\]
Under the action of this group $G$ on the polynomial ring $R\colonequals\FF_2[u,v,w,x,y,z]$, the following elements are readily seen to be invariant:
\[
w,\quad
z,\quad
u^2 + uw,\quad
v^2 + vw,\quad
x^2 + xz,\quad
y^2 + yz,\quad
uz + wx,\quad
vz + wy.
\]
Indeed, the invariant ring $R^G$ is generated by these elements, and is a complete intersection ring with defining relations
\[
(uz+wx)^2 + (uz+wx)wz + (u^2+uw)z^2 + (x^2+xz)w^2\phantom{.}
\]
and
\[
(vz+wy)^2 + (vz+wy)wz + (v^2+vw)z^2 + (y^2+yz)w^2.
\]
It follows that $R^G$ has Hilbert series
\[
\frac{(1-t^4)^2}{(1-t)^2(1-t^2)^6}\ =\ \frac{(1+t^2)^2}{(1-t)^2(1-t^2)^4}\ =\ 1+2t+9t^2+\cdots.
\]

Set $\frakn$ to be the ideal of $R^G$ generated by the homogeneous system of parameters
\[
w^2,\quad
z^2,\quad
u^2 + uw,\quad
v^2 + vw,\quad
x^2 + xz,\quad
y^2 + yz.
\]
Since $R^G$ is Gorenstein with $a(R^G)=-6$, the Hilbert series above yields
\[
\rank{[H^6_\frakn(R^G)]}_{-6}\ =\ 1 \qquad\text{and}\qquad \rank{[H^6_\frakn(R^G)]}_{-7}\ =\ 2.
\]
We claim that, on the other hand,
\[
\rank{[H^6_\frakn(R)^G]}_{-7}\ =\ 4.
\]

Consider the Artinian ring $A\colonequals R/\frakn R$; we identify~${[H^6_\frakn(R)]}_{-6}$ with ${[A]}_6$, and ${[H^6_\frakn(R)]}_{-7}$ with ${[A]}_5$ as $G$-modules.

The rank one space ${[A]}_6$ has basis $uvwxyz$, which is fixed by $g$ and $h$, (as it must!) since
\[
g\colon uvwxyz\ \mapsto\ u(v+w)wx(y+z)z\ \equiv\ uvwxyz
\]
in $A$, and
\[
h\colon uvwxyz\ \mapsto\ (u+w)vw(x+z)yz\ \equiv\ uvwxyz.
\]

For ${[A]}_5$, we work with the basis $vwxyz$, $uwxyz$, $uvxyz$, $uvwyz$, $uvwxz$, $uvwxy$. The first of these elements is fixed since 
\[
g\colon vwxyz\ \mapsto\ (v+w)wx(y+z)z\ \equiv\ vwxyz
\]
and
\[
h\colon vwxyz\ \mapsto\ vw(x+z)yz\ \equiv\ vwxyz.
\]
Similar calculations show that $uwxyz$, $uvwyz$, $uvwxz$ are fixed by $g$ and $h$. On the other hand
\[
g\colon uvxyz\ \mapsto\ u(v+w)x(y+z)z\ \equiv\ (uv+uw)xyz
\]
and
\[
g\colon uvwxy\ \mapsto\ u(v+w)wx(y+z)\ \equiv\ uvw(xy+xz),
\]
so $g$ fixes no nonzero $\FF_2$-linear combination of $uvxyz$ and $uvwxy$. It follows that the subspace of ${[A]}_5$ fixed by $G$ has basis $vwxyz$, $uwxyz$, $uvwyz$, $uvwxz$.
\end{example}

%%%%%%%%%%%%%%%%%%%%%%%%%%%%%%%%%%%%%%%%%%%%%%%%%%%%%%%%%%%%%%%%%%%%%
\section*{Acknowledgments}
%%%%%%%%%%%%%%%%%%%%%%%%%%%%%%%%%%%%%%%%%%%%%%%%%%%%%%%%%%%%%%%%%%%%%

We have benefitted from several examples computed using the computer algebra systems \texttt{Macaulay2}~\cite{Macaulay2} and \texttt{Magma}~\cite{Magma}; the use of these is gratefully acknowledged. We are also deeply grateful to Gregor Kemper for sharing the database~\cite{KKMMVW} and for related discussions, and to Mitsuyasu Hashimoto for valuable discussions. We thank the referees for a careful proofreading, and for useful comments.

%%%%%%%%%%%%%%%%%%%%%%%%%%%%%%%%%%%%%%%%%%%%%%%%%%%%%%%%%%%%%%%%%%%%%%%%

\end{document}